\newtheorem{theorem}{Theorem}[section]
\newtheorem{lemma}[theorem]{Lemma}
\newtheorem{proposition}[theorem]{Proposition}
\theoremstyle{definition}
\newtheorem{definition}[theorem]{Definition}
\newtheorem{construction}[theorem]{Construction}
\theoremstyle{remark}
\newtheorem{remark}[theorem]{Remark}
\numberwithin{equation}{section}
\newfont{\kh}{msbm10}
\begin{document}
\title[Completely positive maps on Hilbert modules]
{Completely positive maps on Hilbert modules over pro-C*-algebras}
\author{Kh. Karimi}
\address{Khadijeh Karimi,
\newline Department of Mathematics, Shahrood University, P.
O. Box 3619995161-316, Shahrood, Iran.}
\email{kh\underline{\space}karimi5005@yahoo.com}
\author{K. Sharifi}
\address{Kamran Sharifi,
\newline Department of Mathematics,
Shahrood University, P.
O. Box 3619995161-316, Shahrood, Iran.
\newline Mathematisches Institut,
Fachbereich Mathematik und Informatik der Universit\"{a}t M\"{u}nster,
Einsteinstrasse 62, 48149 M\"{u}nster, Germany.}
\email{sharifi.kamran@gmail.com}


\subjclass[2010]{Primary 46L05; Secondary 46L08, 46K10}
\keywords {Pro-C*-algebras, Hilbert modules, completely positive maps,
Stinespring theorem, Radon-Nikodym theorem}
\begin{abstract}
We derive Paschke's GNS construction for completely positive maps
on unital pro-C*-algebras from the KSGNS construction, presented by M. Joita
[J. London Math. Soc.  {\bf 66} (2002), 421--432],
and then we deduce an analogue of Stinespring theorem for
Hilbert modules over pro-C*-algebras. Also, we obtain a Radon-Nikodym type
theorem for operator valued completely positive maps on Hilbert
modules over pro-C*-algebras.
\end{abstract}
\maketitle


\section{Introduction}
Completely positive maps are the natural generalization of positive linear functionals.
These maps are extremely applied in the modern theory of C*-algebras and
mathematical model of quantum probability. A completely positive map $\varphi:A\to B$ of
C*-algebras is a linear map with the property that $[\varphi(a_{ij})]_{i,j=1}^{n}$
is a positive element in the C*-algebra $M_{n}(B)$ of all $n\times n$ matrices with
entries in $B$ for all positive matrices $[a_{ij}]_{i,j=1}^{n}$ in $M_{n}(A)$, $n\in\Bbb{N}$.
Given a C*-algebra $A$, the Gelfand-Naimark-Segal construction
(or GNS-construction) establishes a correspondence between cyclic representations of $A$ on
Hilbert spaces and positive linear functionals on $A$. This fundamental theorem has been generalized
for a completely positive linear map from $A$ into $B(H)$ (respectively, from $A$ into a C*-algebra $B$)
to get a representation of $A$ on a Hilbert space
(respectively, on a Hilbert $B$-module) by Stinespring \cite{Stinespring}
(respectively, Paschke \cite{Paschke}). Stinespring 
showed that an operator valued completely positive map $\varphi$ on a unital
C*-algebra $A$ is of the form $V_{\varphi}^{*}\pi_{\varphi}( \cdot ) V_{\varphi}$, where $\pi_{\varphi}$
is a representation of $A$ on a Hilbert space $H_{\varphi}$ and $V_{\varphi}$
is a bounded linear operator. A
version of Stinespring theorem for a class of maps on Hilbert modules over
unital C*-algebras, which are known operator-valued completely positive maps
on Hilbert C*-modules, has been considered by \cite{Asadi, Bhat}.
Skeide \cite{Skeide} has been obtained a very quick proof of the
result of \cite{Bhat} by using induced representations of Hilbert C*-modules.

The theory of completely positive maps on pro-C*-algebras has been studied
systematically in the book \cite{Joita1} and the paper \cite{Joita2} by Joita.
Maliev and Pliev \cite{Maliev} obtained a Stinespring theorem for Hilbert modules over
pro-C*-algebras by extending the methods of \cite{Bhat} from the case of C*-algebras to the
case of pro-C*-algebras. We generalize the Paschke's GNS-construction
to completely positive maps on pro-C*-algebras which enables us to establish
another proof for the Stinespring theorem for
Hilbert modules over pro-C*-algebras.

Let us quickly recall the definition of pro-C*-algebras and Hilbert modules over them.
A pro-C*-algebra is a complete Hausdorff complex
topological $*$-algebra $A$ whose topology is determined by its continuous C*-seminorms in the
sense that the net $\{a_{i}\}_{i \in I}$ converges to $0$ if and only
if the net $\{p(a_i)\}_{i \in I}$ converges to $0$ for every
continuous C*-seminorm $p$ on $A$. For example
the algebra $C(X)$ of all continuous
complex valued functions on a compactly generated space (or a CW complex $X$) with
the topology of compact convergence and the cartesian
product $\prod _{\alpha\in I}A_{\alpha}$ of C*-algebras $A_{\alpha}$ with the product topology
are pro-C*-algebras \cite[\S 7.6]{fra}.
Pro-C*-algebras appear in the study
of certain aspects of C*-algebras such as tangent algebras of C*-algebras, domain of
closed $*$-derivations on C*-algebras, multipliers of
Pedersen's ideal, and noncommutative analogous of classical Lie groups.
These algebras were first introduced by Inoue \cite{ino} who called them
locally C*-algebras and studied more in \cite{Apostol, fra, phil1}
with different names.
A (right) {\it pre-Hilbert module} over a pro-C*-algebra
$A$ is a right $A$-module $E$,
compatible with the complex algebra structure, equipped with an
$A$-valued inner product $\langle \cdot , \cdot \rangle
: E \times E \to A \,, \ (x,y) \mapsto \langle x,y
\rangle$, which is $A$-linear in the second variable $y$
and has the properties:
$$ \langle x,y \rangle=\langle y,x \rangle ^{*}, \ {\rm and} \
 \langle x,x \rangle \geq 0 \ \ {\rm with} \
   {\rm equality} \ {\rm if} \ {\rm and} \ {\rm only} \
   {\rm if} \ x=0.$$
A pre-Hilbert $A$-module $E$ is a Hilbert
$A$-module if $E$ is complete with respect to the
topology determined by the family of seminorms $ \{
\overline{p}_E \}_{p \in S(A)}$  where $\overline{p}_E
( \xi) = \sqrt{ p( \langle \xi, \xi \rangle)}$, $ \xi \in E$.
Hilbert modules over pro-C*-algebras have
been studied in the book \cite{mar1} and the
papers \cite{phil1, SHAMal}.

There is a natural partial ordering on the set of all operator valued
completely positive maps on C*-algebras, defined by $\psi\leq \varphi$
if $\varphi-\psi$ is completely positive. Arveson \cite{Arveson}
characterized this relation in terms of the Stinespring construction
associated to each completely positive map and introduced a notion of
Radon-Nikodym derivative for operator valued completely positive maps
on C*-algebras. Indeed, he showed
that in the unital case, $\psi\leq\varphi$ if and only if there is a unique
positive contraction $\Delta_{\varphi}(\psi)$ (known as Radon-Nikodym
derivative of $\psi$ with respect to $\varphi$) in the commutant of $\pi_{\varphi}(A)$ such that $\psi( \cdot)=V_{\varphi}^{*}\Delta_{\varphi}(\psi)\pi_{\varphi}(\cdot)V_{\varphi}$\,, cf. \cite[Theorem 1.4.2]{Arveson}.
Joita \cite{Joita5} defined a preorder relation in
the set of all operator valued completely positive maps on
Hilbert C*-modules and extended the Radon-Nikodym type theorem for these maps.

In this paper we first present some definitions and basic facts about pro-C*-algebras
and Hilbert modules over them. In Section 3, by using the concept of induced representations
of Hilbert C*-modules, we deduce Stinespring representation theorem
of pro-C*-algebras from Paschke's GNS-construction. Then we
obtain a version of Stinespring representation theorem for
Hilbert modules over pro-C*-algebras. Finally in section 4,
we generalize the Radon-Nikodym theorem for operator valued
completely positive maps on Hilbert modules over pro-C*-algebras.


\section{Preliminaries}
Let $A$ be a pro-C*-algebra, $S(A)$ be the set of all
continuous C*-seminorms on $A$ and $p\in S(A)$. We set
$N_{p}=\{a\in A:\ p(a)=0\}$ then $A_p=A/N_{p}$ is a C*-algebra in the norm induced
by $p$. For $p,q\in S(A)$ with $p\geq q$, the surjective
morphisms $\pi_{pq}: A_p \to A_q$ defined by $\pi_{pq}(a+N_{p})=a+N_{q}$
induce the inverse system $\{A_p; \pi_{pq}\}_{p,q\in S(A), \, p\geq q}$
of C*-algebras and $A = \varprojlim_{p}A_p$, i.e.
the pro-C*-algebra $A$ can be identified with $\varprojlim_{p}A_p$.
The canonical map from $A$ onto $A_p$ is
denoted by $\pi_p$ and $a_{p}$ is reserved to denote $a+N_{p}$.
A morphism of pro-C*-algebras is a continuous morphism of $*$-algebras.
An isomorphism of pro-C*-algebras is a morphism of pro-C*-algebras
which possesses an inverse morphism of pro-C*-algebras.

We denote by $M_{n}(A)$ the set of all $n\times n$ matrices over $A$. The set $M_{n}(A)$
with the usual algebraic operations and the topology obtained by regarding it
as a direct sum of $n^{2}$ copies of $A$ is a pro-C*-algebra. Moreover, it
can be identified with $\varprojlim_{p}M_{n}(A_p)$. Thus the topology on $M_{n}(A)$ is
determined by the family of C*-seminorms $\{p^{(n)}\}_{p\in S(A)}$,
where $p^{(n)}([a_{ij}])=\|[\pi_{p}(a_{ij})]\|_{M_{n}(A_p)}$, $[a_{ij}]\in M_{n}(A)$.

A representation of a pro-C*-algebra $A$ is a continuous $*$-morphism $\varphi: A \to B(H)$,
where $B(H)$ is the C*-algebra of all bounded linear maps on a Hilbert space $H$.
If $(\varphi,H)$ is a representation of $A$, then there is $p\in S(A)$ such that
$\|\varphi(a)\|\leq p(a)$, for all $a\in A$.
The representation $(\varphi_{p},H)$ of $A_{p}$, where $\varphi_{p} \circ  \pi_{p}=\varphi$
is called a representation of $A_{p}$ associated to $(\varphi,H)$.
We refer to \cite{fra, mar4} for more detailed information about the
representation of pro-C*-algebras.

Suppose $E$ is a Hilbert $A$-module and $\langle E,E\rangle$ is the
closure of linear span of $\{ \langle x,y\rangle: ~ x,y\in E \}$.
The Hilbert $A$-module $E$ is called {\it full} if $\langle E,E\rangle=A$.
One can always consider any Hilbert $A$-module as a full Hilbert module
over pro-C*-algebra $\langle E,E\rangle$.
For each $p\in S(A), N_{p}^{E}=\{\xi\in E: \ \bar{p}_{E}(\xi)=0\}$
is a closed submodule of $E$ and $E_{p}=E/N_{p}^{E}$ is a
Hilbert $A_{p}$-module with the action $(\xi+N_{p}^{E})\pi_{p}(a)=\xi a+N_{p}^{E}$
and the inner product $\langle\xi+N_{p}^{E},\eta+N_{p}^{E}\rangle=\pi_{p}(\langle\xi,\eta\rangle).$
The canonical map from $E$ onto $E_{p}$ is denoted by $\sigma_{p}^{E}$ and
$\xi_{p}$ is reserved to denote $\sigma_{p}^{E}(\xi).$
For $p,q\in S(A)$ with $p\geq q$, the surjective morphisms
$\sigma_{pq}^{E}:E_{p} \to E_q$ defined by
$\sigma_{pq}^{E}(\sigma_{p}^{E}(\xi))=\sigma_{q}^{E}(\xi)$
induce the inverse system $\{E_{p};\  A_p;\  \sigma_{pq}^{E},\ \pi_{pq} \}_{p,q\in S(A), \, p\geq q}$\,
of Hilbert C*-modules.
In this case, $\varprojlim_{p}E_{p}$ is a
Hilbert $A$-module which can be identified with $E$.
Let $E$ and $F$ be Hilbert $A$-modules and $T:E\to F$ be an $A$-module map.
The module map $T$ is called bounded if for each $p\in S(A)$,
there is $k_p>0$ such that $\bar{p}_{F}(Tx)\leq k_{p}\ \bar{p}_{E}(x)$ for all $x\in E$.
The set $L_{A}(E,F)$ of all bounded adjointable $A$-module maps from $E$
into $F$ becomes a locally convex space with the topology defined by the
family of seminorms $\{\tilde{p}\}_{p\in S(A)}$, where $\tilde{p}(T)=\|(\pi_{p})_{*}(T)\|_{ L_{A_p}(E_p,F_p)}$
and $(\pi_{p})_{*}:L_{A}(E,F)\to L_{A_p}(E_p,F_p)$ is defined by
$(\pi_p)_{*}(T)(\xi+N_{p}^{E})=T\xi+N_{p}^{F}$, for
all $T\in L_{A}(E,F)$ and $ \xi\in E$. For $p,q\in S(A)$ with $p\geq q$,
the morphisms $(\pi_{pq})_{*}:L_{A_p}(E_p,F_p)\to L_{A_q}(E_q,F_q)$
defined by $(\pi_{pq})_{*}(T_{p})(\sigma_{q}^{E}(\xi))=\sigma_{pq}^{F}(T_p(\sigma_{p}^{E}(\xi)))$ induce
the inverse system $\{ L_{A_p}(E_p,F_p);\ (\pi_{pq})_{*}\}_{p,q\in S(A), \, p\geq q}$
of Banach spaces such that $\varprojlim_{p} L_{A_p}(E_p,F_p)$ can be identified to $L_{A}(E,F)$.
In particular, topologizing, $L_{A}(E,E)$ becomes a
pro-C*-algebra which is abbreviated by $L_{A}(E)$.
The set of all compact operators $K_{A}(E)$ on $E$ is defined as the closed
linear subspace of $L_A(E)$ spanned by
$\{\theta_{x,y}: \ \theta_{x,y}(\xi)=x\langle y,\xi\rangle ~ {\rm for~ all}~ x,y,\xi \in E\}$.
 This is a pro-C*-subalgebra and a two sided ideal of $L_{A}(E)$, moreover,
$K_{A}(E)$ can be identified to $\varprojlim_{p}\ K_{A_p}(E_p)$.

Let $E$ and $F$ be Hilbert modules over pro-C*-algebras $A$ and $B$, respectively,
and $\Psi:A\to L_{B}(F)$ be a continuous $*$-morphism. We can regard $F$ as a
left $A$-module by $(a,y)\to \Psi(a)y, ~a\in A, ~y\in F$. The right $B$-module
$E\otimes_{A}F$ is a pre-Hilbert module
with the inner product given by
$\langle x\otimes y, z\otimes t\rangle=\langle y,\Psi(\langle x,z\rangle)t\rangle$.
We denote by $E\otimes_{\Psi}F$ the completion of $E\otimes_{A}F$,
cf. \cite{mar1} for more detailed information.

\section{Stinespring representation theorem}
In this section, we first generalize Paschke's GNS-construction \cite[Theorem 5.2]{Paschke}
to the framework of unital pro-C*-algebras. It is a particular case of the KSGNS construction for completely positive maps on unital pro-C*-algebras, \cite[Theorem 4.6]{Joita2}. Then we deduce Stinespring representation
theorem in the context of pro-C*-algebras and a version of Stinespring representation theorem for Hilbert modules over pro-C*-algebras. For this aim we briefly restate the concept of induced representations of Hilbert modules over pro-C*-algebras from our recent paper \cite{SK}.

Given pro-C*-algebras $A$ and $B$, a
linear map $\varphi:A\to B$ is said to be {\it positive} if $\varphi(a^*a)\geq 0$
for all $a\in A$. If $\varphi^{(n)}:M_{n}(A)\to M_{n}(B)$ defined
by $\varphi^{(n)}([a_{ij}]_{i,j=1}^{n})=[\varphi(a_{ij})]_{i,j=1}^{n}$
is positive, then $\varphi$ is said to be $n$-positive. If $\varphi$
is $n$-positive for all natural numbers $n$, then $\varphi$ is called
a {\it completely positive map} \cite{Bhatt}. Let $H$ be a Hilbert space
and $\varphi:A\to B(H)$ be an operator valued completely positive
map then the condition of positivity \cite{Stinespring} can be written in the form
$$\sum_{i,j=1}^{n}\langle \varphi(a_{i}^{*}a_{j})h_{j},h_{i}\rangle\geq 0,
\ {\rm for~all} ~ h_{j}\in H,~ a_{j}\in A,~ j=1,...,n,~ {\rm and ~ all}~ n\in \Bbb{N}.$$

Let $A$ and $B$ be pro-C*-algebras and let $E$ be a Hilbert $B$-module.
A continuous $*$-morphism from $A$ into $L_{A}(E)$ is called a
{\it continuous representation} of $A$ on $E$.

\begin{theorem}\label{Cp1}
Let $A$ and $B$ be two unital pro-C*-algebras and $\varphi:A \to B$ be a continuous completely positive map.
There is a Hilbert $B$-module $X$, a unital continuous representation
$\pi_{\varphi}$ of $A$ on $X$, $\pi_{\varphi}:A\to L_{B}(X)$, and an element $\xi\in X$ such that
\begin{enumerate}
 \item $\varphi(a)=\langle \xi,\pi_{\varphi}(a)\xi \rangle$ for all $a\in A$;
 \item the set $\chi_{\varphi}=span\{\pi_{\varphi}(a)(\xi b):  a \in A, b \in B \}$ is a dense subspace of $X$.
\end{enumerate}
In this case, we say that $\pi_{\varphi}$ is the Paschke's GNS construction associated to completely positive map $\varphi$.
\end{theorem}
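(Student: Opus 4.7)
The plan is to derive this as a direct specialization of Joita's KSGNS theorem \cite[Theorem 4.6]{Joita2}, which handles completely positive maps of the form $\Phi: A \to L_B(E)$ for a Hilbert $B$-module $E$. Since $B$ is unital, the Hilbert $B$-module $B$ (acting on itself) satisfies $L_B(B) \cong B$ as pro-C*-algebras, where a module map $T$ is identified with the element $T(1_B) \in B$. Hence the continuous completely positive map $\varphi:A\to B$ can be reinterpreted as a completely positive map $\varphi: A \to L_B(B)$, making it amenable to the KSGNS machinery.

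Applying Joita's KSGNS construction to $\varphi: A \to L_B(B)$ produces a Hilbert $B$-module $X$, a continuous representation $\pi_\varphi: A \to L_B(X)$, and an adjointable map $V_\varphi \in L_B(B, X)$ satisfying $\varphi(a) = V_\varphi^* \pi_\varphi(a) V_\varphi$, together with the density condition that $\mathrm{span}\{\pi_\varphi(a) V_\varphi(b): a \in A,\ b \in B\}$ is dense in $X$. The next step is to use the standard identification $L_B(B, X) \cong X$: the map $V \mapsto V(1_B)$ is an isomorphism of Hilbert $B$-modules, under which $V_\varphi \leftrightarrow \xi := V_\varphi(1_B)$. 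Since $V_\varphi$ is a right $B$-module map, $V_\varphi(b) = \xi b$ for every $b \in B$, and its adjoint $V_\varphi^*: X \to B$ is given by $V_\varphi^*(x) = \langle \xi, x\rangle$, as is verified from the defining identity $\langle V_\varphi^* x, b\rangle_B = \langle x, \xi b\rangle = \langle \xi, x\rangle^* \cdot b$ in combination with the unit of $B$.

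With these identifications in hand, property (1) follows by a direct computation: evaluating the KSGNS relation at $1_B$ yields
\[
\varphi(a) = \varphi(a)(1_B) = V_\varphi^* \pi_\varphi(a) V_\varphi(1_B) = V_\varphi^* \pi_\varphi(a)\xi = \langle \xi, \pi_\varphi(a)\xi\rangle.
\]
Property (2) is immediate from the KSGNS density statement, since the set there becomes $\mathrm{span}\{\pi_\varphi(a)(\xi b): a\in A, b\in B\}$ after the identification $V_\varphi(b)=\xi b$. For unitality of $\pi_\varphi$, one observes that $\pi_\varphi$ is a $*$-morphism, so $\pi_\varphi(1_A)\pi_\varphi(a) = \pi_\varphi(a)$; hence $\pi_\varphi(1_A)$ acts as the identity on every $\pi_\varphi(a)(\xi b)$, and the density of $\chi_\varphi$ in $X$ forces $\pi_\varphi(1_A) = \mathrm{id}_X$.

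The principal conceptual point is the translation step, that is, recognizing $B = L_B(B)$ under unitality of $B$ and $L_B(B,X) \cong X$ via evaluation at $1_B$; this reduction is what converts Joita's operator-valued KSGNS data $(X, \pi_\varphi, V_\varphi)$ into the Paschke-type data $(X, \pi_\varphi, \xi)$. No serious additional work is required beyond this identification, since continuity of $\pi_\varphi$, the Hilbert $B$-module structure on $X$, and the density of $\chi_\varphi$ are all built into the conclusion of \cite[Theorem 4.6]{Joita2}. The only mild subtlety to handle is checking that $\xi$ really implements $\varphi$ with the correct adjointness convention of the $B$-valued inner product, which amounts to the short calculation displayed above.
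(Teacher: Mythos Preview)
Your proposal is correct and follows essentially the same approach as the paper: the paper's proof is the single sentence ``The result follows by taking into account $E=B$ and $\xi=V_{\varphi}(1_{B})$ in the proof of \cite[Theorem 4.6]{Joita2},'' and you have simply spelled out the identifications $L_B(B)\cong B$, $L_B(B,X)\cong X$, and the resulting computations in more detail than the paper does.
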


The result follows by taking into account $E=B$ and $\xi=V_{\varphi}(1_{B})$ in the proof of \cite[Theorem 4.6]{Joita2},
where $1_{B}$ is the unit of $B$.

Let $E$ and $F$ be Hilbert modules over pro-C*-algebras $A$ and $B$, respectively
and $\varphi:A\to B$ be a morphism of pro-C*-algebras. A map $\Phi: E \to F$ is
said to be a $\varphi$-morphism if
$\langle \Phi(x), \Phi(y) \rangle=\varphi(\langle x,y\rangle)$, for all $x,y\in E$.
A $\varphi$-morphism $\Phi:E\to F$ is said to be a completely
positive map if $\varphi:A\to B$ be a completely positive map.

\begin{lemma}\label{Cp2}
Let $E$ and $F$ be Hilbert modules over unital pro-C*-algebras $A$ and $B$, respectively,
and let $\Phi:E\to F$ be a completely positive map. Suppose that $X$, $\xi$ and $\pi_{\varphi}$
are the same as Theorem \ref{Cp1}. Then there exists
an isometry $v: E\otimes_{\pi_{\varphi}}X\to F$ such that $v(z\otimes \xi)=\Phi(z)$,
for all $z\in E$.
\end{lemma}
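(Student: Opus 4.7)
The plan is to build $v$ first on a dense subspace of $E \otimes_{\pi_\varphi} X$ via the natural formula suggested by the interior tensor product balancing, verify by a direct inner-product computation that it preserves the $B$-valued inner product, and then extend by continuity.

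Concretely, set $D := \mathrm{span}\{w \otimes \xi b : w \in E,\ b \in B\} \subseteq E \otimes_{\pi_\varphi} X$. This subspace is dense: by Theorem \ref{Cp1}, the subspace $\chi_\varphi$ is dense in $X$, so any elementary tensor $z \otimes x$ can be approximated by $z \otimes x_n$ with $x_n = \sum_k \pi_\varphi(a_k)(\xi b_k) \in \chi_\varphi$, and the $A$-balancing identity $z \otimes \pi_\varphi(a)(\xi b) = (za) \otimes (\xi b)$ places these approximants inside $D$. On $D$ I define
\[
v_0\!\left(\sum_i w_i \otimes \xi b_i\right) := \sum_i \Phi(w_i)\, b_i,
\]
so that in particular $v_0(w \otimes \xi) = \Phi(w)$ on taking a single term with $b = 1_B$.

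The key verification is the chain
\begin{align*}
\left\langle \sum_i w_i \otimes \xi b_i,\ \sum_j w_j' \otimes \xi b_j'\right\rangle
&= \sum_{i,j} b_i^*\,\langle \xi,\, \pi_\varphi(\langle w_i, w_j'\rangle)\xi\rangle\, b_j' \\
&= \sum_{i,j} b_i^*\,\varphi(\langle w_i, w_j'\rangle)\, b_j' \\
&= \sum_{i,j} b_i^*\,\langle \Phi(w_i), \Phi(w_j')\rangle\, b_j' \\
&= \left\langle \sum_i \Phi(w_i) b_i,\ \sum_j \Phi(w_j') b_j'\right\rangle,
\end{align*}
where the first step unwinds the definition of the interior tensor product inner product, the second invokes the Paschke identity $\varphi(a) = \langle \xi, \pi_\varphi(a)\xi\rangle$ from Theorem \ref{Cp1}, and the third uses the $\varphi$-morphism property of $\Phi$. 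This identity shows that $v_0$ preserves the $B$-valued inner product on $D$; in particular $\overline{p}_F(v_0(u)) = \overline{p}_{E \otimes_{\pi_\varphi} X}(u)$ for every $p \in S(B)$, and any formal sum representing the zero element of the tensor product has image zero in $F$, so $v_0$ is well-defined.

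Being inner-product preserving on a dense subspace, $v_0$ extends uniquely by continuity to an isometry $v: E \otimes_{\pi_\varphi} X \to F$, and the identity $v(z \otimes \xi) = \Phi(z)$ passes to the extension. The main obstacle I anticipate is the well-definedness of $v_0$, because the interior tensor product identifies many formally distinct sums through the $A$-balancing; as indicated, this issue is fully absorbed into the inner-product preservation computation. A secondary care point, specific to the pro-C*-setting, is that the density of $D$ and the isometry property must be verified with respect to the whole family $\{\overline{p}\}_{p \in S(B)}$ of seminorms, which follows from the continuity of $\pi_\varphi$ and of the tensor pairing.
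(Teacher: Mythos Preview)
Your proof is correct and follows essentially the same approach as the paper's: an inner-product computation on a dense set of elementary tensors using the Paschke identity $\varphi(a)=\langle\xi,\pi_\varphi(a)\xi\rangle$ and the $\varphi$-morphism property of $\Phi$, followed by extension by continuity. The only cosmetic difference is that the paper works with tensors $z\otimes\pi_\varphi(a)(\xi b)$ and maps them to $\Phi(za)b$, whereas you first invoke the balancing relation $z\otimes\pi_\varphi(a)(\xi b)=(za)\otimes\xi b$ to reduce to the simpler tensors $w\otimes\xi b$; your version is slightly more explicit about density and well-definedness, but the arguments are the same.
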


\begin{proof}
For $a,c\in A$, $b,d\in B$ and $z,w \in E$ we have
\begin{eqnarray*}
\langle z\otimes (\pi_{\varphi}(a)(\xi b)),w\otimes (\pi_{\varphi}(c)(\xi d)) \rangle &=&
\langle  \pi_{\varphi}(a)(\xi b), \pi_{\varphi}(\langle z,w \rangle)(\pi_{\varphi}(c)(\xi d)) \rangle
\\ &=& b^{*}\langle \xi,\pi_{\varphi}(\langle za,wc\rangle)\xi \rangle  d
\\ &=& b^{*}\varphi(\langle za,wc\rangle)  d
\\ &=& b^{*}\langle \Phi(za),\Phi(wc)\rangle  d
\\ &=& \langle \Phi(za)b,\Phi(wc)d\rangle.
\end{eqnarray*}
Since ${\rm span} \{\pi_{\varphi}(a)(\xi b):  a \in A, b \in B \}$ is a dense
subspace of $X$, the map $z\otimes (\pi_{\varphi}(a)(\xi b)) \mapsto \Phi(za)b$
defines an isometry $v: E\otimes_{\pi_{\varphi}}X\to F$. In particular,
we find $v(z\otimes \xi)=\Phi(z)$ for all $z\in E$, when $a=1_{A}$ and $b=1_{B}$.
\end{proof}

Let $H$ and $K$ be Hilbert spaces. Then the space $B(H,K)$ of all bounded operators from $H$ into $K$
can be considered as a Hilbert $B(H)$-module with the module action
$(T,S)\to TS$, $T\in B(H,K)$ and $S\in B(H)$ and the inner product defined by
$\langle T,S\rangle=T^{*}S$, $T,S\in B(H,K)$. Murphy \cite{mur} showed that any
Hilbert C*-module can be represented as a submodule of the concrete Hilbert module $B(H,K)$
for some Hilbert spaces $H$ and $K$.
This allows us to extend the notion of a representation from the
context of C*-algebras to the context of Hilbert C*-modules.
Let $E$ and $F$ be two Hilbert modules over C*-algebras $A$ and $B$, respectively,
and let $\varphi:A \to B$ be a morphism of C*-algebras. A $\varphi$-morphism $\Phi:E\to B(H,K)$,
where $\varphi:A \to B(H)$ is a representation of $A$ is called a representation of $E$.
When $\Phi$ is a representation of $E$, we assume that an associated
representation of $A$ is denoted by
the same lowercase letter $\varphi$, so we will not explicitly mention $\varphi$.
Let $\Phi:E\to B(H,K)$ be a representation of a Hilbert $A$-module $E$.
We say $\Phi$ is a non-degenerate representation if
$[ \Phi(E)(H)]=K$ and $[ \Phi(E)^{*}(K)]=H$, where $[ \Phi(E)(H)]$
denotes the closure of ${\rm span} \{ \Phi( \xi )(h) ; ~ \xi \in E, ~h \in H \}$.
Two representations $\Phi_{i}:E\to B(H_{i},K_{i})$ of $E$, $i=1,2$
are said to be unitarily equivalent, if there are
unitary operators $U_{1}:H_{1}\to H_{2}$ and $U_{2}:K_{1}\to K_{2}$,
such that $U_{2}\Phi_{1}(x)=\Phi_{2}(x)U_{1}$ for all $x\in E$.
Representations of Hilbert modules have been investigated in ~\cite{Arambasic,ske1}.

Skeide ~\cite{ske1} recovered the result of
Murphy by embedding of every Hilbert $A$-module
$E$ into a matrix C*-algebra as a lower submodule. He proved that every
representation of $A$ induces a representation of $E$ and a representation of $L_{A}(E)$. We describe his
induced representations as follows.
\begin{construction}\label{const1}
Let $A$ is a C*-algebra and $E$ be a Hilbert $A$-module
and $\varphi: A \to B(H)$ be a $*$-representation
of $A$. Define a sesquilinear form $\langle.,.\rangle$ on the vector space $E\otimes H$ by
$\langle x\otimes h,y\otimes k\rangle=\langle h,\varphi(\langle x,y\rangle)k\rangle_{H},$
where $\langle.,.\rangle_{H}$ denotes the inner product on the Hilbert space $H$.
By \cite[Proposition 3.8]{ske1}, the sesquilinear form is positive and so $E\otimes H$
is a semi-Hilbert space. Then $(E\otimes H)/N_{\varphi}$
is a pre-Hilbert space with the inner product defined by
$$\langle x\otimes h+N_{\varphi} \ ,\  y\otimes k+N_{\varphi}\rangle=\langle x\otimes h,y\otimes k\rangle,$$
where $N_{\varphi}$ is the vector subspace of $E\otimes H$
generated by $\{x\otimes h\in E\otimes H: \langle x\otimes h,x\otimes h\rangle=0 \}$.
Let $K$ be the completion of $(E\otimes H)/N_{\varphi}$ with respect to the above inner product.
We identify the elements $x\otimes h$ with the equivalence classes $x\otimes h+N_{\varphi}\in$  $K$.
Suppose $x\in E$ and $L_{x}h=x\otimes h$ then
$\| L_{x}h\|^2= \langle h,\varphi(\langle x,x\rangle)h\rangle\leq \| h \|^2\|x\|^2$, i.e.
$L_{x}\in B(H,K)$. If $L_{x}^{*}$ be the adjoint of $L_{x}$ then it is easy to show
that $L_{x}^{*}(y\otimes h)=\varphi(\langle x,y\rangle)h$ for every $y\in E$ and $h\in H$.
We define $\eta_{\varphi}: E \to B(H,K)$ by $\eta_{\varphi}(x)=L_{x}$. Then
for $x,x'\in E$,  $h,h'\in H$ and $a\in A$ we have
$\langle \eta_{\varphi}(x),\eta_{\varphi}(x')\rangle=\varphi(\langle x,x'\rangle)$ and
$\eta_{\varphi}(xa)=\eta_{\varphi}(x)\varphi(a)$, and so
$ \eta_{\varphi}$ is a representation of $E$.

Let $T\in L_{A}(E)$. We associate with $T$ a map on $E\otimes H$ by $x\otimes h\to Tx\otimes h$.
Since $\langle x\otimes h, Tx^{'}\otimes h^{'}\rangle=\langle T^{*}x\otimes h, x^{'}\otimes h^{'}\rangle$,
this map leaves invariant $N_{\varphi}$ so that it induces a map $\rho_{0}(T)$ on $(E\otimes H)/N_{\varphi}$. By
 \cite [Lemma 3.9]{ske1}, $\| \rho_{0}(T)\|=\|T\|$ and so $\rho_{0}(T)$ is bounded  and can be
extended to a bounded operator $\rho(T)$ on $K$.
Therefore $\rho: L_{A}(E)\to B(K)$ defined by $T\to \rho(T)$ is a representation of $L_{A}(E)$ on $K$.
\end{construction}

Now, we reformulate representations of Hilbert module
from the case of C*-algebras to the case of pro-C*-algebras.
Let $E$ and $F$ be two Hilbert modules over pro-C*-algebras $A$ and $B$,
respectively, and $\varphi:A \to B$ be a morphism of pro-C*-algebras.
A $\varphi$-morphism $\Phi:E\to B(H,K)$, where $\varphi:A \to B(H)$ is a
representation of $A$ is called a representation of $E$. If $p\in S(A)$ and $\varphi_{p}$ be
a representation of $A_{p}$ associated to $\varphi$, then it is
easy to see that the map $\Phi_{p}: E_{p}\to B(H,K)$, $\Phi_{p}(\sigma_{p}^{E}(x))=\Phi(x)$
is a $\varphi_{p}$-morphism. In this case, we say that $\Phi_{p}$
is a representation of $E_{p}$ associated to $\Phi$.
We can define non-degenerate representations and unitarily
equivalent representations for Hilbert modules over
pro-C*-algebras like Hilbert C*-modules case.

\begin{remark}\label{Cp3}
Suppose $A$ is a pro-C*-algebra, $E$ a Hilbert $A$-module
and $\varphi:A \to B(H)$ a representation of $A$ on some Hilbert space $H$.
Suppose $p \in S(A)$ and $\varphi_{p}$ is a representation of $A_{p}$ associated to $\varphi$.
By the above Construction $\varphi_{p}$ induces a representation
$\eta_{\varphi_{p}}:E_{p}\to B(H,K)$ of $E_{p}$ where $K$ is a
Hilbert space associated to $E_{p}\otimes H$.
It is easy to see that the map
$\eta_{\varphi}:E \to B(H,K)$, $\eta_{\varphi}(x)=\eta_{\varphi_{p}}(\sigma_{p}^{E}(x))$ is a
$\varphi$-morphism, i.e. it is a representation of $E$.
\end{remark}
The following theorem is a version of Stinespring representation theorem for pro-C*-algebras
that can be considered as a special case of KSGNS construction for completely positive maps on unital pro-C*-algebras,
by setting $B= \mathbb{C}$ in \cite[Theorem 4.6]{Joita2}).
We prove this theorem by using the concept of induced representations of Hilbert pro-C*-modules.
\begin{theorem} \label{Cp4}
Let $A$ be a unital pro-C*-algebras and $\varphi:A\to B(H)$ be a continuous operator
valued completely positive map. Then there exist a Hilbert space $H_{\varphi}$,
a unital representation $\pi_{\varphi}:A\to B(H_{\varphi})$ and a bounded
linear operator $V_{\varphi}\in B(H,H_{\varphi})$ such that
$\varphi(a)=V_{\varphi}^{*}\pi_{\varphi}(a)V_{\varphi}$ for all $a\in A$.
\end{theorem}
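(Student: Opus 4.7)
The plan is to combine Paschke's GNS construction (Theorem \ref{Cp1}) with Skeide's induced representation machinery (Construction \ref{const1}), treating $B(H)$ as the coefficient pro-C*-algebra. Since $B(H)$ is a unital C*-algebra, and hence a unital pro-C*-algebra, I would apply Theorem \ref{Cp1} directly to $\varphi: A \to B(H)$. This yields a Hilbert $B(H)$-module $X$, a unital continuous representation $\pi: A \to L_{B(H)}(X)$, and a distinguished element $\xi \in X$ satisfying $\varphi(a) = \langle \xi, \pi(a)\xi \rangle$ for every $a \in A$.

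Next I would pass from the module-valued picture to an honest Hilbert space picture by inducing along the identity representation $\iota: B(H) \to B(H)$. Feeding the Hilbert C*-module $X$ into Construction \ref{const1} with $\iota$ produces a Hilbert space $H_{\varphi}$, realized as the completion of $(X \otimes H)/N_{\iota}$, together with the induced $*$-representation $\rho: L_{B(H)}(X) \to B(H_{\varphi})$ and the operators $L_{x} \in B(H,H_{\varphi})$, $x\in X$. I then set $\pi_{\varphi} := \rho \circ \pi: A \to B(H_{\varphi})$ and $V_{\varphi} := L_{\xi} \in B(H,H_{\varphi})$. Since $\pi$ is a continuous unital $*$-morphism and $\rho$ is a $*$-homomorphism of C*-algebras preserving the unit, $\pi_{\varphi}$ is automatically a unital continuous representation of $A$ on $H_{\varphi}$.

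It then remains to verify the Stinespring identity $\varphi(a) = V_{\varphi}^{*}\pi_{\varphi}(a)V_{\varphi}$. For $h,k \in H$ this reduces to computing $\langle \rho(\pi(a))(\xi \otimes h), \xi \otimes k \rangle_{H_{\varphi}} = \langle \pi(a)\xi \otimes h,\, \xi \otimes k\rangle$, which by the defining sesquilinear form of Construction \ref{const1} equals $\langle h,\, \iota(\langle \pi(a)\xi, \xi\rangle) k \rangle_{H}$. Using the Hermitian symmetry of the $B(H)$-valued inner product together with the GNS identity, $\langle \pi(a)\xi, \xi\rangle = \langle \xi, \pi(a)\xi\rangle^{*} = \varphi(a)^{*}$, so the expression collapses to $\langle \varphi(a)h, k\rangle_{H}$, as required.

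I do not foresee a serious technical obstacle. The one conceptual move is recognizing that $B(H)$ qualifies as the coefficient algebra in the Paschke step; after this, all subsequent work takes place in the Hilbert C*-module setting where Construction \ref{const1} applies without any inverse-limit bookkeeping, because the pro-C*-algebra features of $A$ have already been absorbed into the continuity of $\pi$ granted by Theorem \ref{Cp1}. Boundedness of $V_{\varphi}=L_{\xi}$ is the standard estimate $\|L_{\xi}h\|^{2}\le \|\langle \xi,\xi\rangle\|\,\|h\|^{2}=\|\varphi(1_{A})\|\,\|h\|^{2}$ from the construction itself, so no extra argument is needed on that point.
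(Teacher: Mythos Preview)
Your proposal is correct and follows essentially the same approach as the paper: apply Theorem~\ref{Cp1} to obtain the Paschke GNS triple $(X,\pi,\xi)$ over $B(H)$, induce along the identity representation $\iota$ of $B(H)$ via Construction~\ref{const1} to get $H_{\varphi}$, $\rho$, and $V_{\varphi}=L_{\xi}=\eta_{\iota}(\xi)$, and set $\pi_{\varphi}=\rho\circ\pi$. The only cosmetic difference is that the paper verifies the Stinespring identity by computing $V_{\varphi}^{*}(\pi'_{\varphi}(a)\xi\otimes h)$ directly as $\iota(\langle\xi,\pi'_{\varphi}(a)\xi\rangle)h$, whereas you pair against a second vector $k$; the content is identical.
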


\begin{proof}
Suppose that $\pi^{'}_{\varphi}$ is Paschke's GNS construction associated to $\varphi$
and $X, \chi_{\varphi}$ and $\xi$ are as in Theorem \ref{Cp1}. Let $\iota$
be the identity map on $B(H)$. If we consider $\iota$ as a representation
of $B(H)$ on $H$ and apply Construction \ref{const1}, we get a Hilbert
space $H_{\varphi}$ (associated to $X\otimes H$), an induced representation
$\eta_{\iota}:X\to B(H,H_{\varphi})$ of $X$ and a representation
$\rho_{\varphi}:L_{B(H)}(X)\to B(H_{\varphi})$ of $L_{B(H)}(X)$.
We define $V_{\varphi}:=\eta_{\iota}(\xi)$  and $\pi_{\varphi}:=\rho_{\varphi} \circ \pi^{'}_{\varphi}$.
If $a\in A$ and $h\in H$, we have
\begin{eqnarray*}
V_{\varphi}^{*}\pi_{\varphi}(a)V_{\varphi}(h) = V_{\varphi}^{*}\pi_{\varphi}(a)(\xi\otimes h)
&=& V_{\varphi}^{*}(\pi^{'}_{\varphi}(a)\xi\otimes h)
\\ &=& \iota(\langle \xi,\pi^{'}_{\varphi}(a)\xi \rangle)h = \varphi(a)h.
\end{eqnarray*}
Hence, $\varphi(a)=V_{\varphi}^{*}\pi_{\varphi}(a)V_{\varphi}$ for all $a\in A$.
\end{proof}

In the rest of this section we establish \cite[Theorems 2.1 and 2.4 ]{Bhat} in the context of
pro-C*-algebra.
\begin{theorem}\label{Cp5}
Let $A$ be a unital pro-C*-algebra and $\varphi:A\to B(H)$ be a continuous
completely positive map. Let $E$ be a Hilbert $A$-module and $\Phi:E\to B(H,K)$ be a $\varphi$-morphism.
Then there exist triples $(\pi_{\varphi}, V_{\varphi}, H_{\varphi})$ and
$(\pi_{\Phi}, W_{\Phi}, K_{\Phi})$, where
\begin{enumerate}
\item $H_{\varphi}$ and $K_{\Phi}$ are Hilbert spaces;
\item $\pi_{\varphi}:A\to B(H_{\varphi})$ is a unital representation of $A$;
\item $\pi_{\Phi}:E\to B(H_{\varphi}, K_{\Phi})$ is a $\pi_{\varphi}$-morphism;
\item $V_{\varphi}:H\to H_{\varphi}$ and $W_{\Phi}:K\to K_{\Phi}$ are bounded linear operators such that
$\varphi(a)=V_{\varphi}^{*}\pi_{\varphi}(a)V_{\varphi},\  for\  all\  a\in A\  and\  \Phi(z)=W^{*}_{\Phi}\pi_{\Phi}(z)V_{\varphi},
\  for\  all\  z\in E.$
\end{enumerate}
\end{theorem}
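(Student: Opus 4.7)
The plan is to first apply Theorem \ref{Cp4} to the completely positive map $\varphi$ to obtain the triple $(\pi_{\varphi}, V_{\varphi}, H_{\varphi})$, re-using its construction explicitly so that the internal objects remain available. Recall that this produces, via Paschke's Theorem \ref{Cp1}, a Hilbert $B(H)$-module $X$, a vector $\xi\in X$, and a representation $\pi'_{\varphi}:A\to L_{B(H)}(X)$ with $\varphi(a)=\langle\xi,\pi'_{\varphi}(a)\xi\rangle$; Construction \ref{const1} applied with the identity representation of $B(H)$ then yields the Hilbert space $H_{\varphi}$ (the completion of $X\otimes H$ modulo the null vectors), together with $V_{\varphi}h=\xi\otimes h$ and $\pi_{\varphi}(a)(\xi'\otimes h)=\pi'_{\varphi}(a)\xi'\otimes h$. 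This settles items (1) and (2) and gives the Stinespring relation for $\varphi$.

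For $\pi_{\Phi}$ and $K_{\Phi}$ I would invoke the induced-representation mechanism of Remark \ref{Cp3} a second time, now applied to the Hilbert $A$-module $E$ together with the representation $\pi_{\varphi}:A\to B(H_{\varphi})$ produced above. This yields a Hilbert space $K_{\Phi}$ arising as the completion of $E\otimes H_{\varphi}$ modulo the null vectors of the sesquilinear form
$$\langle z\otimes v,\ z'\otimes v'\rangle \;=\; \langle v,\ \pi_{\varphi}(\langle z,z'\rangle)\,v'\rangle_{H_{\varphi}},$$
along with a $\pi_{\varphi}$-morphism $\pi_{\Phi}:=\eta_{\pi_{\varphi}}:E\to B(H_{\varphi},K_{\Phi})$ defined by $\pi_{\Phi}(z)v=z\otimes v$ and enjoying the module property $\pi_{\Phi}(za)=\pi_{\Phi}(z)\pi_{\varphi}(a)$. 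This accounts for item (3).

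The heart of the argument is the construction of $W_{\Phi}:K\to K_{\Phi}$. Since the Paschke cyclicity clause makes $\{\pi'_{\varphi}(a)\xi\otimes h:\ a\in A,\ h\in H\}$ dense in $H_{\varphi}$, the vectors $\pi_{\Phi}(za)V_{\varphi}h=z\otimes(\pi'_{\varphi}(a)\xi\otimes h)$ span a dense subspace of $K_{\Phi}$. On this subspace I will define
$$T\bigl(\pi_{\Phi}(za)V_{\varphi}h\bigr)\;:=\;\Phi(za)h,$$
and verify the identity
$$\bigl\langle \pi_{\Phi}(za)V_{\varphi}h,\ \pi_{\Phi}(z'a')V_{\varphi}h'\bigr\rangle_{K_{\Phi}}\;=\;\bigl\langle h,\ \varphi(\langle za,z'a'\rangle)\,h'\bigr\rangle_{H}\;=\;\bigl\langle \Phi(za)h,\ \Phi(z'a')h'\bigr\rangle_{K},$$
whose first equality uses the $\pi_{\varphi}$-morphism property of $\pi_{\Phi}$ together with the Stinespring relation $V_{\varphi}^{*}\pi_{\varphi}(\cdot)V_{\varphi}=\varphi(\cdot)$, and whose second equality uses that $\Phi$ is a $\varphi$-morphism. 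This simultaneously delivers well-definedness and the isometry property of $T$, so $T$ extends to an isometry $K_{\Phi}\to K$; setting $W_{\Phi}:=T^{*}$ produces the required bounded operator, and specializing the defining formula for $T$ to $a=1_{A}$ gives $W_{\Phi}^{*}\pi_{\Phi}(z)V_{\varphi}h=\Phi(z)h$, which is precisely (4).

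The main obstacle is exactly that compatibility identity: it is the point where the Paschke GNS description of $\varphi$, the induced module structure on $K_{\Phi}$, and the $\varphi$-morphism property of $\Phi$ all have to mesh correctly. Once it is secured, the remainder is a routine passage to adjoints and extension by continuity, and the unitality of $A$ enters only to permit the substitution $a=1_{A}$ at the final step.
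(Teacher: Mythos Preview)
Your proposal is correct and follows essentially the same route as the paper. The only differences are cosmetic: the paper factors the induced representation explicitly through the C*-quotient $E_{p}$ (after choosing $p\in S(A)$ so that $\pi'_{\varphi}$ is $p$-continuous) and isolates your compatibility identity as a separate preliminary step (Lemma~\ref{Cp2}, producing an isometry $v:E\otimes_{\pi'_{\varphi}}X\to B(H,K)$), whereas you verify it directly on the spanning vectors $\pi_{\Phi}(za)V_{\varphi}h$.
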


\begin{proof}
Let $\pi^{'}_{\varphi}:A\to L_{B(H)}(X)$ be the Paschke's GNS construction associated
to $\varphi$. By continuity of $\pi^{'}_{\varphi}$, there exists $M>0$ and $p\in S(A)$
such that $\|\pi^{'}_{\varphi}(a)\|\leq Mp(a)$, for all $a\in A$.
Let $(\pi_{\varphi}, V_{\varphi}, H_{\varphi})$ be the Stinespring
triple for $\varphi$ as obtained in Theorem \ref{Cp4}.
Since $\pi_{\varphi}=\rho_{\varphi} \circ \pi^{'}_{\varphi}$,
we may consider $(\pi_{\varphi})_{p}$ as a representation of $A_{p}$
associated to $\pi_{\varphi}$. By Remark \ref{Cp3}, the
Stinespring representation $\pi_{\varphi}$ induces a representation
$\pi_{\Phi}:E\to B(H_{\varphi}, K_{\Phi})$ of $E$, where $K_{\Phi}$
is the Hilbert space associated to $E_{p}\otimes H_{\varphi}$.
Moreover, Lemma \ref{Cp2} implies the existence of an isometry
$v:E\otimes_{\pi^{'}_{\varphi}} X\to B(H,K)$ which is defined
by $v(x\otimes \xi)=\Phi(x)$ for all $x\in E$.
We consider the linear map $W_{0}:(E_{p}\otimes X)\otimes H\to K$ defined by
$W_{0}((\sigma^{E}_{p}(z)\otimes x)\otimes h)=v(z\otimes x)h,$
where $z\in E$, $x\in X$ and $h\in H$.
Let $z\in E$ and $\sigma^{E}_{p}(z)=0$. Since
$\|v(z\otimes x)\|^{2}=\|z\otimes x\|^{2}=\langle z\otimes x,z\otimes x\rangle=
\langle x, \pi^{'}_{\varphi}(\langle z,z\rangle)\rangle$,
we have $v(z\otimes x)=0$ which shows that $W_{0}$ is well-defined.
Moreover,
\begin{eqnarray*}
\|\sum_{i=1}^{n}(\sigma_{p}^{E}(z_{i})\otimes x_{i})\otimes h_{i}\|^{2} &=& \sum_{i,j=1}^{n}\langle  h_{i},\langle \sigma_{p}^{E}(z_{i})\otimes x_{i},\sigma_{p}^{E}(z_{j})\otimes x_{j}\rangle h_{j}\rangle
\\ &=& \sum_{i,j=1}^{n}\langle h_{i},\langle x_{i},\pi^{'}_{\varphi}(\langle z_{i},z_{j}\rangle)x_{j}\rangle h_{j}\rangle
\\ &=& \sum_{i,j=1}^{n}\langle h_{i},\langle z_{i}\otimes x_{i},z_{j}\otimes x_{j}\rangle h_{j}\rangle
\\ &=& \sum_{i,j=1}^{n}\langle h_{i},\langle v(z_{i}\otimes x_{i}),v(z_{j}\otimes x_{j})\rangle h_{j}\rangle
\\ &=& \sum_{i,j=1}^{n}\langle v(z_{i}\otimes x_{i})h_{i}, v(z_{j}\otimes x_{j})h_{j}\rangle
\\ &=& \|\sum_{i=1}^{n}v(z_{i}\otimes x_{i})h_{i}\|^{2}
\end{eqnarray*}
which implies that $W_{0}$ is an isometry.
Since 
$H_{\varphi}$ is the Hilbert space associated to $X\otimes H$, $W_{0}$ can be extended to a bounded
linear operator $W: K_{\Phi}\to K$. We define $W_{\Phi}:=W^{*}$, then
\begin{eqnarray*}
W^{*}_{\Phi}\pi_{\Phi}(z)V_{\varphi}(h) &=& W\pi_{\Phi}(z)(\xi \otimes h)
\\ &=& W(\pi_{\Phi})_{p} (\sigma^{E}_{p}(z))(\xi \otimes h)
\\ &=& W(\sigma^{E}_{p}(z)\otimes (\xi \otimes h))
\\ &=& W((\sigma^{E}_{p}(z)\otimes \xi) \otimes h)
\\ &=& v(z\otimes\xi)h = \Phi(z)h,
\end{eqnarray*}
for all $z\in E$ and $h\in H$. Hence, $\Phi(z)=W^{*}_{\Phi}\pi_{\Phi}(z)V_{\varphi}$ for all $z\in E$.
\end{proof}

\begin{remark}\label{Cp6}
Let $\varphi$ and $\Phi$ be as in Theorem \ref{Cp5} and  $q\in S(A)$.

(1) In the proof of Theorem \ref{Cp5}, if $(\pi_{\varphi})_{q}$ be a
representation of $A_{q}$ associated to $\pi_{\varphi}$ then we obtain a
representation $\tilde{\pi}_{\Phi}:E\to B(H_{\varphi},\tilde{K}_{\Phi})$,
where $\tilde{K}_{\Phi}$ is a Hilbert space associated to $E_{q}\otimes H_{\varphi}$.
It is easy to show that $\pi_{\Phi}$ and $\tilde{\pi}_{\Phi}$ are two
unitarily equivalent representations of $E$.

(2) The bounded linear operator $W_{\Phi}:K\to K_{\Phi}$ is a coisometry.
Indeed, for $z\in E,~x\in X$ and $h\in H$ we have
\begin{eqnarray*}
\langle W_{\Phi}^{*}(\sigma_{p}^{E}(z)\otimes x\otimes h),W_{\Phi}^{*}(\sigma_{p}^{E}(z)\otimes x\otimes h)\rangle &=&
\langle v(z\otimes x)h,v(z\otimes x)h\rangle
\\ &=& \langle v(z\otimes x)^{*}v(z\otimes x)h,h\rangle
\\ &=& \langle \langle v(z\otimes x),v(z\otimes x)\rangle h,h\rangle
\\ &=& \langle \langle z\otimes x,z\otimes x\rangle h,h\rangle
\\ &=& \langle h,\langle x,\pi_{\varphi}^{'}(\langle z,z\rangle)x\rangle h\rangle
\\ &=& \langle x\otimes h,\pi_{\varphi}^{'}(\langle z,z\rangle)x\otimes h\rangle
\\ &=&\langle x\otimes h,(\rho_{\varphi}\circ \pi_{\varphi}^{'})(\langle z,z\rangle)(x\otimes h)\rangle
\\ &=& \langle x\otimes h, \pi_{\varphi}(\langle z,z\rangle)(x\otimes h)\rangle
\\ &=& \langle x\otimes h,(\pi_{\varphi})_{p}(\langle \sigma_{p}^{E}(z),\sigma_{p}^{E}(z)\rangle)(x\otimes h)\rangle
\\ &=&\langle \sigma_{p}^{E}(z)\otimes x\otimes h,\sigma_{p}^{E}(z)\otimes x\otimes h\rangle
\end{eqnarray*}

(3) If $E$ is full then $\pi_{\Phi}:E\to B(H_{\varphi},K_{\Phi})$ is a
non-degenerate representation of $E$. To see this, let $z\in E$ and
$h_{\varphi}\in H_{\varphi}$ then $\pi_{\Phi}(z)(h_{\varphi})=\sigma_{p}^{E}(z)\otimes h_{\varphi}$.
Since $K_{\Phi}$ is a Hilbert space associated to $E_{p}\otimes H_{\varphi}$,
$[\pi_{\Phi}(E)(H_{\varphi})]=K_{\Phi}$. Moreover,
for $w\in E, x\in X$ and $h\in H$ we have
$$\pi_{\Phi}(z)^{*}(\sigma_{p}^{E}(w)\otimes x\otimes h)=
\pi_{\varphi}(\langle z,z\rangle)(x\otimes h)=\pi_{p}^{'}(\langle z,z\rangle)(x)\otimes h.$$
Since $E$ is full, $[\pi_{\varphi}^{'}(A)(X)]=X$. The Hilbert space $H_{\varphi}$ is
associated to $X\otimes H$ which follows that $\pi_{\Phi}(E)^{*}(K_{\Phi})=H_{\varphi}$.
\end{remark}

\begin{definition}
Let $\varphi$ and $\Phi$ be as in Theorem \ref{Cp5}. We say that the
pair $((\pi_{\varphi},V_{\varphi},H_{\varphi}),(\pi_{\Phi}, W_{\Phi},K_{\Phi}))$
is a {\it Stinespring representation} of
$(\varphi,\Phi)$ if conditions (1)-(3) of Theorem \ref{Cp5} are fulfilled.
Such a representation is said to be minimal if
\begin{enumerate}
\item $[\pi_{\varphi}(A)V_{\varphi}H]=H_{\varphi}$, and
\item $[\pi_{\Phi}(E)V_{\varphi}H]=K_{\Phi}$.
\end{enumerate}

\end{definition}

\begin{remark}
The pair $((\pi_{\varphi},V_{\varphi},H_{\varphi}),(\pi_{\Phi}, W_{\Phi},K_{\Phi}))$
obtained in Theorem \ref{Cp5} is a minimal representation for $(\varphi,\Phi)$ since
\begin{eqnarray*}
[\pi_{\varphi}(A)V_{\varphi}H] &=& [(\rho_{\varphi} \circ \pi^{'}_{\varphi})(A)(\xi\otimes H)]
\\ &=& [(\pi^{'}_{\varphi}(A)(\xi))\otimes H]
\\ &=& [\chi_{\varphi}\otimes H] = H_{\varphi}
\end{eqnarray*}
and
\begin{eqnarray*}
[\pi_{\Phi}(E)V_{\varphi}H]=[\pi_{\Phi}(E)\pi_{\varphi}(A)V_{\varphi}H] &=& [(\pi_{\Phi})_{p}(E_{p})H_{\varphi}]
\\ &=& [E_{p}\otimes H_{\varphi}] = K_{\Phi}.
\end{eqnarray*}
\end{remark}

The following result shows that the minimal Stinespring representation
is unique up to the unitarily equivalency.
\begin{proposition} \label{Cp7}
Let $\varphi$ and $\Phi$ be as in Theorem \ref{Cp5} and
$((\pi_{A},V^{'},H^{'}),(\pi_{E},W^{'},K^{'}))$ be a minimal
representation for $(\varphi,\Phi)$. Then there are two unitary
operators $U_{1}:H_{\varphi}\to H^{'}$ and $U_{2}:K_{\Phi}\to K^{'}$ such that
\begin{enumerate}
\item $V^{'}=U_{1}V_{\varphi}$,\ \ $U_{1}\pi_{\varphi}(a)=\pi_{A}(a)U_{1}$, for all $a\in A$ and
\item $W^{'}=U_{2}W_{\Phi}$,\ \ $U_{2}\pi_{\Phi}(z)=\pi_{E}(z)U_{1}$, for all $z\in E$.
\end{enumerate}
\end{proposition}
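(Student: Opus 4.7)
The plan is to imitate the classical uniqueness-of-GNS argument, applied twice — once at the level of the Stinespring triple for $\varphi$ to produce $U_1$, and once at the level of the representation of $E$ to produce $U_2$ — and then to deduce $W'=U_2W_\Phi$ from minimality of the second triple.

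First I would define $U_1:H_\varphi\to H'$ densely by
$$U_1\!\left(\sum_i \pi_\varphi(a_i)V_\varphi h_i\right)=\sum_i \pi_A(a_i)V'h_i.$$
The identity $\langle \pi_\varphi(a)V_\varphi h,\pi_\varphi(b)V_\varphi k\rangle=\langle h,\varphi(a^*b)k\rangle$, together with its analogue for $\pi_A$ and $V'$, shows both sides of the defining equation have equal norm in their respective Hilbert spaces, so $U_1$ is well-defined and isometric. Since $[\pi_A(A)V'H]=H'$ by minimality, $U_1$ extends to a unitary. Setting $a=1_A$ gives $V'=U_1V_\varphi$, and checking on the dense subspace gives the intertwining $U_1\pi_\varphi(a)=\pi_A(a)U_1$.

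Next, I would define $U_2:K_\Phi\to K'$ densely by
$$U_2\!\left(\sum_i \pi_\Phi(z_i)V_\varphi h_i\right)=\sum_i \pi_E(z_i)V'h_i.$$
Well-definedness and isometry now rest on the $\pi_\varphi$-morphism identity $\pi_\Phi(z)^*\pi_\Phi(w)=\pi_\varphi(\langle z,w\rangle)$ (and its analogue for $\pi_E$), which reduces both inner products to $\langle h,\varphi(\langle z,w\rangle)k\rangle$. Density $[\pi_E(E)V'H]=K'$ extends $U_2$ to a unitary. Using $\pi_\Phi(za)=\pi_\Phi(z)\pi_\varphi(a)$, $\pi_E(za)=\pi_E(z)\pi_A(a)$, and $V'=U_1V_\varphi$, a direct calculation on vectors of the form $\pi_\varphi(a)V_\varphi h$ yields $U_2\pi_\Phi(z)=\pi_E(z)U_1$.

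Finally, to obtain $W'=U_2W_\Phi$, I would use the two expressions $\Phi(z)=W_\Phi^*\pi_\Phi(z)V_\varphi=(W')^*\pi_E(z)V'$. Substituting $\pi_\Phi(z)=U_2^*\pi_E(z)U_1$ (from the intertwining) together with $U_1V_\varphi=V'$ rewrites $W_\Phi^*\pi_\Phi(z)V_\varphi$ as $(U_2W_\Phi)^*\pi_E(z)V'$, so $(W'-U_2W_\Phi)^*\pi_E(z)V'h=0$ for every $z\in E$ and $h\in H$. Since these vectors span a dense subspace of $K'$ by minimality, we conclude $W'=U_2W_\Phi$. The step I expect to require the most care is the well-definedness and isometry of $U_2$, which hinges on translating the inner product in $K_\Phi$ — built from $\pi_\Phi$ and $V_\varphi$ — back into a $\varphi$-value between vectors of $H$, via the $\pi_\varphi$-morphism identity.
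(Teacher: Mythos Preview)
Your proposal is correct and follows essentially the same route as the paper: the paper obtains $U_1$ by citing \cite[Theorem 4.6(2)]{Joita2} and constructs $U_2$ by exactly the formula you wrote, $U_2(\sum_i\pi_\Phi(z_i)V_\varphi h_i)=\sum_i\pi_E(z_i)V'h_i$, deferring the verification to \cite[Theorem 2.4]{Bhat}. Your write-up is in fact more complete than the paper's, since you spell out the GNS-type isometry computation for $U_1$ and give an explicit argument for $W'=U_2W_\Phi$ via minimality, a step the paper leaves implicit.
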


\begin{proof}
Existence $U_1$ and the statement (1) follow from \cite[Theorem 4.6 (2)]{Joita2}.
As in the proof of \cite[Theorem 2.4]{Bhat},
we define the linear map $U_{2}:{\rm span}(\pi_{\Phi}(E)V_{\varphi}H)\to {\rm span}(\pi_{E}(E)V^{'}H)$ by
$$U_{2}(\sum_{i=1}^{n}\pi_{\Phi}(z_{i})V_{\varphi}h_{i})=\sum_{i=1}^{n}\pi_{E}(z_{i})V^{'}h_{i},$$
for $z_{i}\in E,\ h_{i}\in H$ and $n\geq 1$. Then $U_{2}$ is a well-defined
isometry and so it can be extended to a unitary $U_{2}$ from $K_{\Phi}$ onto $K^{'}$ which
satisfies the statement (2).
\end{proof}

\section{Radon-Nikodym derivatives}
A Radon-Nikodym-type theorem for operator valued completely positive maps on Hilbert
C*-modules has been demonstrated in \cite{Joita5} by Joita. We are going to generalize
her definitions and results to the case of Hilbert modules over pro-C*-algebras.
Let $E$ be a full Hilbert module over a pro-C*-algebra $A$ and $H,K$ be two Hilbert spaces.
The set of all completely positive maps of $E$ into $B(H,K)$ will be denoted by $CP(E,B(H,K))$.
There is an equivalence relation on $CP(E,B(H,K))$ as follows.

\begin{definition}
Let $\Phi$ and $\Psi$ be in $CP(E,B(H,K))$. We say that $\Phi$ is equivalent to $\Psi$,
denoted by $\Phi\sim \Psi$, if $\Phi(x)^{*}\Phi(x)=\Psi(x)^{*}\Psi(x)$ for all $x\in E$.
\end{definition}

\begin{definition}
Let $\Phi$ and $\Psi$ be in $CP(E,B(H,K))$. We say that $\Psi$ is dominated by $\Phi$,
denoted by $\Psi\preceq\Phi$, if $\Psi(x)^{*}\Psi(x)\leq \Phi(x)^{*}\Phi(x)$ for all $x\in E$.
\end{definition}

\begin{remark}
The relation $` `\preceq"$ is reflexive and transitive and so is a preorder
relation on $CP(E,B(H,K))$. Moreover, if $\Phi,\Psi\in CP(E,B(H,K))$ then $\Phi\preceq\Psi$
and $\Psi\preceq\Phi$ if and only if $\Phi\sim\Psi$.
\end{remark}

In \cite{Arambasic}, Aramba\v{s}i\'c extended the definition of the commutant of a
C*-algebra to a Hilbert C*-module. We define a similar notion for Hilbert modules over pro-C*-algebras.

\begin{definition}
Let $A$ be a pro-C*-algebra and $\Phi:E\to B(H,K)$ be a representation of a Hilbert $A$- module $E$.
The commutant of $\Phi(E)$, which is denoted by $\Phi(E)^{'}$, is defined by
$$\{T\oplus S\in B(H\oplus K): T\in B(H), S\in B(K), \Phi(z)T=S\Phi(z), \Phi(z)^{*}S=T\Phi(z)^{*}, z\in E\}$$
in which, $(T\oplus S) (h\oplus k):=Th\oplus Sk$.
\end{definition}

If $T\oplus S\in \Phi(E)^{'}$, then $T\in \varphi(A)^{'}$, cf. \cite[Lemma 4.4]{Arambasic}.
If $\Phi$ is non-degenerate, then $S$ is uniquely determined by $T$, cf. \cite[Note 4.6]{Arambasic}.

\begin{lemma}
Let $\Phi\in CP(E,B(H,K))$ and $((\pi_{\varphi},V_{\varphi},H_{\varphi}),(\pi_{\Phi}, W_{\Phi},K_{\Phi}))$
be the Stinespring representation of $(\varphi,\Phi)$.
If $T\oplus S$ be a positive linear operator in $\pi_{\Phi}(E)^{'}$,
then the map $\Phi_{T\oplus S}:E\to B(H,K)$ defined by
$\Phi_{T\oplus S}(x)=W_{\Phi}^{*}\sqrt{T}\pi_{\Phi}(x)\sqrt{S}V_{\varphi}$ is completely positive.
\end{lemma}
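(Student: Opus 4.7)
The plan is to produce a completely positive map $\varphi_{T\oplus S}:A\to B(H)$ with respect to which $\Phi_{T\oplus S}$ is a $\varphi_{T\oplus S}$-morphism; by the definition of a completely positive map on a Hilbert pro-C*-module given just before Lemma~\ref{Cp2}, this will suffice.

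The first step is to show that $T$ (and hence $\sqrt{T}$) commutes with $\pi_\varphi(A)$. The two intertwining relations $\pi_\Phi(z)T=S\pi_\Phi(z)$ and $\pi_\Phi(z)^{*}S=T\pi_\Phi(z)^{*}$ built into the definition of $\pi_\Phi(E)'$, combined with the $\pi_\varphi$-morphism identity $\pi_\Phi(x)^{*}\pi_\Phi(y)=\pi_\varphi(\langle x,y\rangle)$, yield
\[
T\pi_\varphi(\langle x,y\rangle)=T\pi_\Phi(x)^{*}\pi_\Phi(y)=\pi_\Phi(x)^{*}S\pi_\Phi(y)=\pi_\Phi(x)^{*}\pi_\Phi(y)T=\pi_\varphi(\langle x,y\rangle)T.
\]
Fullness of $E$ gives $\overline{\mathrm{span}}\{\langle x,y\rangle:x,y\in E\}=A$, so $T$ commutes with $\pi_\varphi(A)$, and $\sqrt{T}$ does as well by the continuous functional calculus. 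I would then define $\varphi_{T\oplus S}(a):=V_\varphi^{*}T\pi_\varphi(a)TV_\varphi$ and note that this map is completely positive: $\pi_\varphi$ is a $*$-representation (hence completely positive), and the conjugation $X\mapsto (TV_\varphi)^{*}X(TV_\varphi)$ preserves complete positivity.

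The final step is to verify the $\varphi_{T\oplus S}$-morphism property by a direct computation of $\Phi_{T\oplus S}(x)^{*}\Phi_{T\oplus S}(y)$. Applying, in order, (i)~the coisometry identity $W_\Phi W_\Phi^{*}=I_{K_\Phi}$ from Remark~\ref{Cp6}(2), which collapses the central $\sqrt{S}\,W_\Phi W_\Phi^{*}\sqrt{S}$ into $S$, (ii)~the intertwining $\pi_\Phi(x)^{*}S=T\pi_\Phi(x)^{*}$, (iii)~the $\pi_\varphi$-morphism identity $\pi_\Phi(x)^{*}\pi_\Phi(y)=\pi_\varphi(\langle x,y\rangle)$, and (iv)~the commutation of $\sqrt{T}$ with $\pi_\varphi(A)$ to regroup the two factors of $T^{1/2}$ next to $V_\varphi^{*}$ and $V_\varphi$, the expression collapses to $V_\varphi^{*}T\pi_\varphi(\langle x,y\rangle)TV_\varphi=\varphi_{T\oplus S}(\langle x,y\rangle)$. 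Hence $\Phi_{T\oplus S}$ is a $\varphi_{T\oplus S}$-morphism and therefore completely positive. The main obstacle is bookkeeping: $\sqrt{T}$ and $\sqrt{S}$ live on different Hilbert spaces, so one must apply the coisometry identity first to eliminate $S$ through the intertwining $\pi_\Phi(x)^{*}S=T\pi_\Phi(x)^{*}$, and only then invoke the commutation of $T$ with $\pi_\varphi$ so that the remaining $T$'s can be paired cleanly with the $V_\varphi$'s to yield the target completely positive map.
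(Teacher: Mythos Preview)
Your argument is correct and is essentially the paper's own proof written out in full: the paper cites \cite[Lemma~2.10]{Joita5} for the identity $\Phi_{T\oplus S}(x)^{*}\Phi_{T\oplus S}(y)=V_{\varphi}^{*}T^{2}\pi_{\varphi}(\langle x,y\rangle)V_{\varphi}$ and \cite[Lemma~3.4.1]{Joita1} for the complete positivity of $\varphi_{T^{2}}$, whereas you carry out both steps directly using the coisometry of $W_\Phi$, the intertwining relations defining $\pi_\Phi(E)'$, and the observation that $T\in\pi_\varphi(A)'$. Your map $\varphi_{T\oplus S}(a)=V_\varphi^{*}T\pi_\varphi(a)TV_\varphi$ coincides with the paper's $\varphi_{T^{2}}$ once one uses that $T$ commutes with $\pi_\varphi(A)$.
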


\begin{proof}
As in proof of \cite[Lemma 2.10]{Joita5},
$\Phi_{T\oplus S}(x)^{*}\Phi_{T\oplus S}(y)=V_{\varphi}^{*}T^{2}\pi_{\varphi}(\langle x,y\rangle)V_{\varphi}$,
for all $x,y\in E$. Using \cite[Lemma 3.4.1]{Joita1} and the fact that $T^{2}\in \pi_{\varphi}(A)^{'}$,
we find $\Phi_{T\oplus S}(x)^{*}\Phi_{T\oplus S}(y)=\varphi_{T^{2}}(\langle x,y\rangle)$.
Indeed, the completely positive map associated to $\Phi_{T\oplus S}$ is $\varphi_{T^{2}}$.
\end{proof}

\begin{theorem}\label{Cp8}
Let $\Psi,\Phi\in CP(E,B(H,K))$. If $\Psi \preceq\Phi$, then there is a
unique positive linear operator $\Delta_{\Phi}(\Psi)$ in $\pi_{\Phi}(E)^{'}$
such that $\Psi \sim \Phi_{\sqrt{\Delta_{\Phi}(\Psi)}}$.
\end{theorem}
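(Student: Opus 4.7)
The strategy is to descend the problem to the level of the associated completely positive maps on $A$, apply a pro-C*-algebra Radon--Nikodym theorem there, and then lift the result back up to the Stinespring space $K_{\Phi}$. Denote by $\varphi_{\Psi}:A\to B(H)$ the CP map satisfying $\Psi(x)^{*}\Psi(y)=\varphi_{\Psi}(\langle x,y\rangle)$, and write $\varphi$ for the CP map associated with $\Phi$. The hypothesis $\Psi\preceq\Phi$ immediately yields $\varphi_{\Psi}(\langle x,x\rangle)\le \varphi(\langle x,x\rangle)$ for every $x\in E$, and since $E$ is full the elements $\langle x,x\rangle$ generate a dense subset of $A_{+}$, so $\varphi_{\Psi}\le\varphi$ on all of $A_{+}$. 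An application of the pro-C*-algebra analogue of Arveson's Radon--Nikodym theorem (an inverse-limit version of \cite[Theorem~1.4.2]{Arveson}, extractable from \cite[Theorem~4.6]{Joita2}) to the Stinespring triple $(\pi_{\varphi},V_{\varphi},H_{\varphi})$ then yields a unique positive contraction $T\in\pi_{\varphi}(A)'$ such that $\varphi_{\Psi}(a)=V_{\varphi}^{*}T\pi_{\varphi}(a)V_{\varphi}$ for all $a\in A$.

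To promote $T$ to an element of $\pi_{\Phi}(E)'$, I would define $S$ on the dense subspace $\mathrm{span}\{\pi_{\Phi}(x)\xi : x\in E,\ \xi\in H_{\varphi}\}$ of $K_{\Phi}$ by
$$S\Bigl(\sum_{i}\pi_{\Phi}(x_{i})\xi_{i}\Bigr):=\sum_{i}\pi_{\Phi}(x_{i})T\xi_{i}.$$
Using that $\pi_{\Phi}$ is a $\pi_{\varphi}$-morphism by Theorem~\ref{Cp5}, the norm squared expands to
$$\Bigl\|\sum_{i}\pi_{\Phi}(x_{i})T\xi_{i}\Bigr\|^{2}=\sum_{i,j}\bigl\langle T^{2}\pi_{\varphi}(\langle x_{i},x_{j}\rangle)\xi_{j},\xi_{i}\bigr\rangle=\langle \mathbf{T}^{2}P\vec{\xi},\vec{\xi}\rangle,$$
where $\mathbf{T}=\operatorname{diag}(T,\dots,T)\in B(H_{\varphi}^{n})$ and $P=[\pi_{\varphi}(\langle x_{i},x_{j}\rangle)]\ge 0$ in $B(H_{\varphi}^{n})$ by complete positivity of $\varphi$. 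Because $T\in\pi_{\varphi}(A)'$ the operator $\mathbf{T}$ commutes with $P$, and $T^{2}\le I$ then forces $\mathbf{T}^{2}P\le P$, so $S$ is well-defined and contractive and extends by continuity to $K_{\Phi}$. Taking adjoints gives $\pi_{\Phi}(z)T=S\pi_{\Phi}(z)$ and $\pi_{\Phi}(z)^{*}S=T\pi_{\Phi}(z)^{*}$, whence $T\oplus S\in \pi_{\Phi}(E)'$; positivity of $T\oplus S$ is inherited from $T\ge 0$ together with the parallel computation on $S$.

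Setting $\Delta_{\Phi}(\Psi):=T\oplus S$, the preceding lemma applied to $\sqrt{T}\oplus \sqrt{S}$ (which lies in $\pi_{\Phi}(E)'$ by continuous functional calculus applied to the intertwining relation) yields
$$\Phi_{\sqrt{\Delta_{\Phi}(\Psi)}}(x)^{*}\Phi_{\sqrt{\Delta_{\Phi}(\Psi)}}(x)=V_{\varphi}^{*}T\pi_{\varphi}(\langle x,x\rangle)V_{\varphi}=\varphi_{\Psi}(\langle x,x\rangle)=\Psi(x)^{*}\Psi(x),$$
i.e.\ $\Psi\sim\Phi_{\sqrt{\Delta_{\Phi}(\Psi)}}$. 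For uniqueness, any other positive $T'\oplus S'\in\pi_{\Phi}(E)'$ with the same property satisfies $V_{\varphi}^{*}(T-T')\pi_{\varphi}(\langle x,x\rangle)V_{\varphi}=0$ for all $x\in E$; fullness of $E$, the commutativity $T-T'\in\pi_{\varphi}(A)'$, and minimality of the Stinespring representation (Proposition~\ref{Cp7}) then force $T=T'$, after which the defining identity $S\pi_{\Phi}(z)=\pi_{\Phi}(z)T$ on the generating set gives $S=S'$. The principal obstacle I anticipate is the lift step: the well-definedness of $S$ hinges on simultaneously exploiting the matricial positivity coming from complete positivity of $\varphi$ and the commutation $T\in \pi_{\varphi}(A)'$, and this in turn depends on securing the pro-C*-algebra Arveson--Radon--Nikodym theorem at the level of ordinary CP maps, which may require an explicit inverse-limit argument through the quotients $A_{p}$.
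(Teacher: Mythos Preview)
Your route is genuinely different from the paper's. The paper does not attempt a direct construction at the pro-C* level: it chooses $r\in S(A)$ dominating the continuity seminorms of both $\varphi$ and $\psi$, passes to the C*-quotients $A_r$, $E_r$, applies the Hilbert C*-module Radon--Nikodym theorem \cite[Theorem~2.12]{Joita5} to $(\varphi_r,\Phi_r)$ and $(\psi_r,\Psi_r)$ to obtain $\Delta_{\Phi_r}(\Psi_r)=\Delta_{1}\oplus\Delta_{2}$, and then conjugates this operator back to $\pi_{\Phi}(E)'$ via the unitaries furnished by Proposition~\ref{Cp7}. Uniqueness is handled by \cite[Theorem~3.4.5]{Joita1} (this, not \cite[Theorem~4.6]{Joita2}, is the pro-C* Arveson theorem) together with non-degeneracy of $\pi_{\Phi}$. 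Your plan---get $T$ from the scalar Radon--Nikodym theorem, then manufacture $S$ by hand on the dense span of $\pi_{\Phi}(E)H_{\varphi}$---is more direct, and your construction of $S$ from $T$ (well-definedness via the commuting-matrix argument, the intertwining relations, positivity) is correct.

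The gap is the input to Arveson. That theorem requires $\varphi-\varphi_{\Psi}$ to be \emph{completely} positive, but your argument only yields ordinary positivity: from $\Psi(x)^{*}\Psi(x)\le\Phi(x)^{*}\Phi(x)$ you obtain $\varphi_{\Psi}(a)\le\varphi(a)$ for $a$ in the closure of finite sums $\sum_k\langle x_k,x_k\rangle$, hence for $a\in A_{+}$ by fullness, but the matricial inequality $[\varphi_{\Psi}(a_i^{*}a_j)]\le[\varphi(a_i^{*}a_j)]$ does not follow from the pointwise hypothesis $\Psi\preceq\Phi$ in any evident way. Concretely, invoking Arveson amounts to showing $\sum_{i,j}\langle h_i,\varphi_{\Psi}(a_i^{*}a_j)h_j\rangle\le\sum_{i,j}\langle h_i,\varphi(a_i^{*}a_j)h_j\rangle$, i.e.\ $\|\sum_i\Psi(x_i)h_i\|\le\|\sum_i\Phi(x_i)h_i\|$ after reducing via fullness---and this multi-vector bound is not a consequence of the single-vector bound you have. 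The paper avoids this trap by outsourcing the entire module-level step to \cite{Joita5} at the C*-level; the ``explicit inverse-limit argument through the quotients $A_{p}$'' you anticipate in your final sentence is precisely what the paper does, so your instinct about where the difficulty lies is right.
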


\begin{proof}
Let $((\pi_{\varphi},V_{\varphi},H_{\varphi}),(\pi_{\Phi}, W_{\Phi},K_{\Phi}))$
be the Stinespring representation of $(\varphi,\Phi)$. Continuity
of $\varphi$ and $\psi$ implies that there exist $p,q\in S(A)$ and $M,N>0$ such that
$\|\varphi(a)\|\leq Mp(a)$ and $\|\psi(a)\|\leq Nq(a),$ for all $a\in A$.
Let $r\in S(A)$ and $r\geq p,q$. The linear maps
$\varphi_{r}:A_{r}\to B(H), \varphi_{r}(\pi_{r}(a))=\varphi(a)$ and
$\psi_{r}:A_{r}\to B(H), \psi_{r}(\pi_{r}(a))=\psi(a)$ are
completely positive maps since,
$\sum_{i,j=1}^{n}\langle \varphi_{r}(\pi_{r}(a_{i})^{*}\pi_{r}(a_{j}))x_{j},x_{i}\rangle =
\sum_{i,j=1}^{n}\langle \varphi(a_{i}^{*}a_{j})x_{j},x_{i}\rangle\geq 0,
~{\rm for~all~}a_i \in A, ~  x_i\in H~ {\rm and} ~ 1 \leq i \leq n .$

The maps $\Phi_{r}: \sigma_{r}^{E}(x) \mapsto \Phi(x)$ and
$\Psi_{r}: \sigma_{r}^{E}(x) \mapsto \Psi(x)$
are in $CP(E_{r},B(H,K))$ and $\Psi_{r}\preceq\Phi_{r}$. Let
$((\pi_{\varphi_{r}},V_{\varphi_{r}},H_{\varphi_{r}}),
(\pi_{\Phi_{r}}, W_{\Phi_{r}},K_{\Phi_{r}}))$ be the Stinespring
representation of $(\varphi_{r},\Phi_{r})$. By the proof of \cite[Theorem 2.12]{Joita5},
there are unique positive linear operators
$\Delta_{1\Phi_{r}}(\Psi_{r}) \in B(H_{\varphi_{r}})$ and $\Delta_{2\Phi_{r}}(\Psi_{r}) \in B(K_{\Phi_{r}})$
such that $\Psi_{r} \sim \Phi_{{r}_{\sqrt{\Delta_{\Phi_{r}}(\Psi_{r})}}}$\,, where
$\Delta_{\Phi_{r}}(\Psi_{r})=\Delta_{1\Phi_{r}}(\Psi_{r})\oplus \Delta_{2\Phi_{r}}(\Psi_{r})\in \pi_{\Phi_{r}}(E)^{'}$
is the Radon-Nikodym derivative of $\Psi_{r}$ with respect to
$\Phi_{r}$. The pairs
$((\pi_{\varphi_{r}} \circ \pi_{r} ,V_{\varphi_{r}},H_{\varphi_{r}}),(\pi_{\Phi_{r}} \circ \sigma_{r}^{E},W_{\Phi_{r}},K_{\Phi_{r}}))$ and $((\pi_{\varphi},V_{\varphi},H_{\varphi}),(\pi_{\Phi}, W_{\Phi},K_{\Phi}))$
are two minimal Stinespring representations of $(\varphi,\Phi)$ and so, by
Proposition \ref{Cp7}, there are two unitary operators $U_{1}:H_{\varphi}\to H_{\varphi_{r}}$
and $U_{2}:K_{\Phi}\to K_{\Phi_{r}}$ such that $V_{\varphi_{r}}=U_{1}V_{\varphi}$\,,
$U_{1}\pi_{\varphi}(a)=(\pi_{\varphi_{r}}\circ \pi_{r})(a)U_{1}$ for all $a\in A$,
$W_{\Phi_{r}}=U_{2}W_{\Phi}$ and $U_{2}\pi_{\Phi}(z)=(\pi_{\Phi_{r}}\circ \sigma_{r})(z)U_{1}$
for all $z\in E$. Let $\Delta_{1\Phi}(\Psi)=U_{1}^{*}\Delta_{1\Phi_{r}}(\Psi_{r})U_{1}
$ and $\Delta_{2\Phi}(\Psi)=U_{2}^{*}\Delta_{2\Phi_{r}}(\Psi_{r})U_{2}$.
It is easy to see that $\Delta_{\Phi}(\Psi)=\Delta_{1\Phi}(\Psi)\oplus \Delta_{2\Phi}(\Psi)$
is a positive operator in $\pi_{\Phi}(E)^{'}$. For every $a\in A$, we have

\begin{eqnarray*}
\psi(a)=\psi_{r}(\pi_{r}(a)) &=& V_{\varphi_{r}}^{*}\Delta_{1\Phi_{r}}(\Psi_{r})\pi_{\varphi_{r}}(\pi_{r}(a))V_{\varphi_{r}}
\\ &=& V_{\varphi}^{*}U_{1}^{*}\Delta_{1\Phi_{r}}(\Psi_{r})U_{1}\pi_{\varphi}(a)U_{1}^{*}U_{1}V_{\varphi}
\\ &=& V_{\varphi}^{*}\Delta_{1\Phi}(\Psi)\pi_{\varphi}(a)V_{\varphi} = \varphi_{\Delta_{1\Phi}(\Psi)}(a).
\end{eqnarray*}
Indeed by the uniqueness of Radon-Nikodym derivative (\cite[Theorem 3.4.5]{Joita1}),
$\Delta_{1\Phi}(\Psi)$ is the Radon-Nikodym derivative of $\psi$ with respect to $\varphi$. Consequently,
$$\Phi_{\sqrt{\Delta_{\Phi}(\Psi)}}^{*}(x) \, \Phi_{\sqrt{\Delta_{\Phi}(\Psi)}}(x)=
\varphi_{\Delta_{1\Phi}(\Psi)}(\langle x,x\rangle)=\psi(\langle x,x\rangle)=\Psi(x)^{*}\Psi(x)$$
for every $x\in E$, which implies $\Psi\sim \Phi_{\sqrt{\Delta_{\Phi}(\Psi)}}$.
Let $T\oplus S$ be another positive linear map in $\pi_{\Phi}(E)^{'}$
such that $\Psi\sim \Phi_{\sqrt{T\oplus S}}$. Then
$\Phi_{\sqrt{\Delta_{\Phi}(\Psi)}}\sim \Phi_{\sqrt{T\oplus S}}$ and so
$\varphi_{\Delta_{1\Phi}(\Psi)}=\varphi_{T}$. By \cite[Theorem 3.4.5]{Joita1},
we deduce that $\Delta_{1\Phi}(\Psi)=T$. Since $\pi_{\Phi}$ is non-degenerate (Remark \ref{Cp6} (3)),
$\Delta_{2\Phi}(\Psi)$ and $S$ are uniquely determined by $\Delta_{1\Phi}(\Psi)$ and $T$, respectively.
Consequently, $\Delta_{2\Phi}(\Psi)=S$ and so $\Delta_{\Phi}(\Psi)=T \oplus S$.
\end{proof}

Suppose that $\Phi\in CP(E,B(H,K))$, $\hat{\Phi}=\{ \Psi\in CP(E,B(H,K)): \Phi\sim\Psi \}$
and $\Phi,\Psi\in CP(E,B(H,K))$. We write $\hat{\Psi}\leq \hat{\Phi}$, if
$\Psi\preceq\Phi$. We define
$$[0,\hat{\Phi}]:=\{ \hat{\Psi}: \Psi\in CP(E,B(H,K)), \Psi\preceq \Phi\}$$
and $$[0,I]_{\Phi} :=\{ T\oplus S \in \pi_{\Phi}(E)^{'}: 0\leq T\oplus S\leq I \}.$$

The following theorem can be thought as a Radon-Nikodym type theorem for
operator valued completely positive maps on Hilbert modules over pro-C*-algebras.

\begin{theorem}
Let $\Phi\in CP(E,B(H,K))$. The map $\hat{\Psi}\to \Delta_{\Phi}(\Psi)$
from $[0,\hat{\Phi}]$ to $[0,I]_{\Phi}$ is an order-preserving isomorphism.
\end{theorem}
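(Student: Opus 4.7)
The plan is to verify the four properties that package an order-preserving isomorphism, modeled on Joita's argument for Hilbert C*-modules \cite[Theorem 2.13]{Joita5}, but using the pro-C*-algebra Radon--Nikodym machinery of Theorem~\ref{Cp8} and the non-degeneracy result Remark~\ref{Cp6}(3). The four properties are: (a) well-definedness (independence of representative and image in $[0,I]_\Phi$); (b) injectivity; (c) surjectivity; (d) order preservation in both directions.

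For (a), suppose $\Psi, \Psi' \in CP(E,B(H,K))$ with $\Psi \sim \Psi'$ and $\Psi \preceq \Phi$. Both $\Psi$ and $\Psi'$ are dominated by $\Phi$, and Theorem~\ref{Cp8} yields $\Psi \sim \Phi_{\sqrt{\Delta_\Phi(\Psi)}}$ and $\Psi' \sim \Phi_{\sqrt{\Delta_\Phi(\Psi')}}$; transitivity of $\sim$ together with the uniqueness clause of Theorem~\ref{Cp8} forces $\Delta_\Phi(\Psi)=\Delta_\Phi(\Psi')$, so the assignment descends to $\hat\Psi$. To see $\Delta_\Phi(\Psi)\le I$, use that $\Psi\preceq\Phi$ translates via $\Psi(z)^*\Psi(z)=V_\varphi^*\Delta_{1\Phi}(\Psi)\pi_\varphi(\langle z,z\rangle)V_\varphi$ and $\Phi(z)^*\Phi(z)=V_\varphi^*\pi_\varphi(\langle z,z\rangle)V_\varphi$ into the inequality $\varphi_{\Delta_{1\Phi}(\Psi)}\preceq\varphi$ of completely positive maps on the pro-C*-algebra $A$, whence $\Delta_{1\Phi}(\Psi)\le I$ by \cite[Theorem~3.4.5]{Joita1}; non-degeneracy of $\pi_\Phi$ (Remark~\ref{Cp6}(3)) together with $\Delta_{2\Phi}(\Psi)$ being uniquely determined by $\Delta_{1\Phi}(\Psi)$ (cf. \cite[Note~4.6]{Arambasic}) then forces $\Delta_{2\Phi}(\Psi)\le I$ as well.

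For (b) and (c): injectivity is immediate, since $\Delta_\Phi(\Psi_1)=\Delta_\Phi(\Psi_2)$ gives $\Phi_{\sqrt{\Delta_\Phi(\Psi_1)}}=\Phi_{\sqrt{\Delta_\Phi(\Psi_2)}}$, and combining with $\Psi_i\sim\Phi_{\sqrt{\Delta_\Phi(\Psi_i)}}$ yields $\hat\Psi_1=\hat\Psi_2$. For surjectivity, given $T\oplus S\in[0,I]_\Phi$, define $\Psi:=\Phi_{\sqrt{T\oplus S}}$; this is in $CP(E,B(H,K))$ by the lemma preceding Theorem~\ref{Cp8}, and the inequality $T\oplus S\le I$ combined with the identity $\Psi(z)^*\Psi(z)=V_\varphi^*T\pi_\varphi(\langle z,z\rangle)V_\varphi$ gives $\Psi\preceq\Phi$; uniqueness in Theorem~\ref{Cp8} finally yields $\Delta_\Phi(\Psi)=T\oplus S$.

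For (d), order preservation in the forward direction follows directly from the computation $\Psi_i(z)^*\Psi_i(z)=V_\varphi^*\Delta_{1\Phi}(\Psi_i)\pi_\varphi(\langle z,z\rangle)V_\varphi$: if $\Delta_\Phi(\Psi_1)\le\Delta_\Phi(\Psi_2)$ then in particular $\Delta_{1\Phi}(\Psi_1)\le\Delta_{1\Phi}(\Psi_2)$, and sandwiching preserves the inequality, giving $\Psi_1\preceq\Psi_2$. The reverse direction is the main obstacle: from $\Psi_1\preceq\Psi_2$ one obtains the inequality $\varphi_{\Delta_{1\Phi}(\Psi_1)}(\langle z,z\rangle)\le\varphi_{\Delta_{1\Phi}(\Psi_2)}(\langle z,z\rangle)$ for all $z\in E$; using that $E$ is full so that $\{\langle z,z\rangle : z\in E\}$ has dense linear span in the positive cone of $A$, polarization and continuity upgrade this to $\varphi_{\Delta_{1\Phi}(\Psi_1)}\preceq\varphi_{\Delta_{1\Phi}(\Psi_2)}$ as completely positive maps on $A$; invoking the uniqueness of the scalar pro-C*-algebra Radon--Nikodym derivative \cite[Theorem~3.4.5]{Joita1} together with the minimality $[\pi_\varphi(A)V_\varphi H]=H_\varphi$ then promotes this to $\Delta_{1\Phi}(\Psi_1)\le\Delta_{1\Phi}(\Psi_2)$; finally, non-degeneracy of $\pi_\Phi$ (Remark~\ref{Cp6}(3)) forces the analogous inequality on the second components, completing $\Delta_\Phi(\Psi_1)\le\Delta_\Phi(\Psi_2)$.
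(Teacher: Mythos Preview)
Your proof follows essentially the same route as the paper's: well-definedness via Theorem~\ref{Cp8}, injectivity and surjectivity via $\Psi\sim\Phi_{\sqrt{\Delta_\Phi(\Psi)}}$ and its converse, and order preservation in both directions by reducing to the scalar Radon--Nikodym theorem \cite[Theorem~3.4.5]{Joita1} together with non-degeneracy of $\pi_\Phi$ (Remark~\ref{Cp6}(3)). You are slightly more explicit in places---for instance, you verify $\Delta_\Phi(\Psi)\le I$ and invoke fullness of $E$ when passing from $\Psi_1\preceq\Psi_2$ to $\psi_1\le\psi_2$, steps the paper leaves implicit---but the structure and key ingredients coincide with the paper's argument.
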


\begin{proof}
The map is well-defined by Theorem \ref{Cp8}.  Let $\hat{\Psi}_1,\hat{\Psi}_2\in [0,\hat{\Phi}]$
and $\Delta_{\Phi}(\Psi_{1})=\Delta_{\Phi}(\Psi_{2})$.
Then $\Psi_{1}\sim\Phi_{\Delta_{\Phi}(\Psi_{1})}= \Phi_{\Delta_{\Phi}(\Psi_{2})}\sim\Psi_{2}$
and so it is injective. Let $T\oplus S\in [0,I]_{\Phi}$ then
$\Phi_{\sqrt{T\oplus S}}\in CP(E,B(H,K))$. Since $T\oplus S \in \pi_{\Phi}(E)^{'}$,
$T\in \pi_{\varphi}(A)^{'}$ and so by \cite[Theorem 3.4.5]{Joita1},
$\Phi_{\sqrt{T\oplus S}}(x)^{*}\Phi_{\sqrt{T\oplus S}}(x)=
\varphi_{T}(\langle x,x\rangle)\leq \varphi(\langle x,x\rangle)=\Phi(x)^{*}\Phi(x)$
for all $x\in E$. Thus $\Phi_{\sqrt{T\oplus S}}\preceq \Phi$.
Since $\Delta(\varphi_{T})=T$, $\Delta_{\Phi}(\Phi_{\sqrt{T\oplus S}})=T\oplus S$, i.e.,
the map is surjective.

If $\hat{\Psi}_1,\hat{\Psi}_2 \in [0,\hat{\Phi}]$ and $\hat{\Psi}_1 \leq \hat{\Psi}_2$,
then $\Psi_{1}\preceq \Psi_{2}$ and so $\psi_{1}\leq \psi_{2}$.
By \cite[Theorem 3.4.5]{Joita1}, we have $\Delta_{1\Phi}(\Psi_{1})\leq \Delta_{1\Phi}(\Psi_{2})$.
Since $\pi_{\Phi}$ is non-degenerate (Remark \ref{Cp6} (3)), $\Delta_{2\Phi}(\Psi_{1})$ and $\Delta_{2\Phi}(\Psi_{2})$ are uniquely determined by $\Delta_{1\Phi}(\Psi_{1})$ and $\Delta_{1\Phi}(\Psi_{2})$, respectively.
Consequently, $\Delta_{2\Phi}(\Psi_{1})\leq \Delta_{2\Phi}(\Psi_{2})$ and so $\Delta_{\Phi}(\Psi_{1})\leq \Delta_{\Phi}(\Psi_{2})$.
Conversely, let $T_{1}\oplus S_{1},T_{2}\oplus S_{2}\in [0,I]_{\Phi}$ and
$T_{1}\oplus S_{1}\leq T_{2}\oplus S_{2}$ then $T_{1},T_{2}\in [0,I]_{\varphi}$ and
$T_{1}\leq T_{2}$. By \cite[Theorem 3.4.5]{Joita1},
$\varphi_{{T_{1}}}\leq \varphi_{{T_{2}}}$ and so
$\Phi_{\sqrt{T_{1}\oplus S_{1}}}\preceq\Phi_{\sqrt{T_{2}\oplus S_{2}}}$.
\end{proof}

{\bf Acknowledgement}:  The second
author would like to thank Maria Joita who sent him
some copies of her recent books. The authors would like to thank
the referee for his/her careful reading and useful comments.

\end{document}